\theoremstyle{plain}
\newtheorem{theorem}{Theorem}
\newtheorem{lemma}[theorem]{Lemma}
\newtheorem{corollary}[theorem]{Corollary}
\theoremstyle{remark}
\newtheorem{remark}{Remark}
\numberwithin{equation}{section}
\renewcommand{\leq}{\leqslant}
\renewcommand{\geq}{\geqslant}
\begin{document}

\title{On binomial sums, additive energies, and lazy random walks}

\author{Vjekoslav Kova\v{c}}
\address{Department of Mathematics, Faculty of Science, University of Zagreb, Bijeni\v{c}ka cesta 30, 10000 Zagreb, Croatia}
\email{vjekovac@math.hr}

\subjclass[2020]{Primary 26D15; 
Secondary 05D05, 
11B30, 
60G50} 

\keywords{additive energy, discrete hypercube, random walk, number mean}

\begin{abstract}
We establish a sharp estimate for $k$-additive energies of subsets of the discrete hypercube conjectured by de Dios Pont, Greenfeld, Ivanisvili, and Madrid, which generalizes a result by Kane and Tao.
This note proves the only missing ingredient, which is an elementary inequality for real numbers, previously verified only for $k\leq100$.
We also give an interpretation of this inequality in terms of a lazy non-symmetric simple random walk on the integer lattice.
\end{abstract}

\maketitle


\section{Introduction}
The main contribution of the present paper is verification of the following elementary inequality conjectured by de Dios Pont, Greenfeld, Ivanisvili, and Madrid \cite{DGIM21}.

\begin{theorem}\label{thm:main}
For every positive integer $k$ and for $a,b\in[0,\infty)$ we have
\begin{equation}\label{eq:mainineq}
\sum_{j=0}^{k} \dbinom{k}{j}^2 a^{p_k(k-j)/k} b^{p_k j/k} \leq (a+b)^{p_k},
\end{equation}
where
\begin{equation}\label{eq:defofpk}
p_k=\log_2\binom{2k}{k}.
\end{equation}
\end{theorem}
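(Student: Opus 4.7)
Both sides of \eqref{eq:mainineq} are homogeneous of degree $p_k$ in $(a,b)$, so I may normalize $a+b=2$ and write $a=1+s$, $b=1-s$ for $s\in[-1,1]$. Dividing by $\binom{2k}{k}=2^{p_k}$ recasts the LHS as an expectation with respect to the hypergeometric law $\mu(j):=\binom{k}{j}^2/\binom{2k}{k}$ on $\{0,1,\dots,k\}$. Applying the elementary factorization $(1+s)^\alpha(1-s)^\beta=(1-s^2)^{(\alpha+\beta)/2}\bigl(\tfrac{1+s}{1-s}\bigr)^{(\alpha-\beta)/2}$ to each summand with $\alpha+\beta=p_k$, and introducing the new variable $\lambda:=\tfrac{p_k}{2}\log\tfrac{1+s}{1-s}$, so that $s=\tanh(\lambda/p_k)$ and $(1-s^2)^{-p_k/2}=\cosh^{p_k}(\lambda/p_k)$, the inequality becomes equivalent to the moment-generating-function bound
\begin{equation*}
\mathbb{E}_\mu\!\left[e^{\lambda T}\right]\;\leq\;\cosh^{p_k}\!\bigl(\lambda/p_k\bigr),\qquad T(j):=\frac{k-2j}{k},\ \lambda\in\mathbb{R}.
\end{equation*}

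The combinatorial meaning of $\mu$ is the heart of the matter: $J\sim\mu$ is distributed as the number of $+1$'s among the first $k$ terms of a uniformly random arrangement of $k$ copies of $+1$ and $k$ copies of $-1$. Equivalently, if $(\widetilde S_n)_{n\geq 0}$ is the simple symmetric random walk on $\mathbb{Z}$ conditioned on the bridge event $\widetilde S_{2k}=0$, then $T$ has the distribution of $-\widetilde S_k/k$. This is the lazy non-symmetric simple random walk alluded to in the abstract, and the target inequality is a moment-generating-function estimate for its midpoint.

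Two asymptotic features force the sharp exponent $p_k$. At $\lambda=0$ both sides equal $1$, while as $|\lambda|\to\infty$ the left side is dominated by the extremal atoms $\mu(0)=\mu(k)=1/\binom{2k}{k}$, giving $e^{|\lambda|}/\binom{2k}{k}$, and the right side satisfies $\cosh^{p_k}(\lambda/p_k)\sim e^{|\lambda|}/2^{p_k}$; the two asymptotics coincide exactly because of the definition $p_k=\log_2\binom{2k}{k}$ in \eqref{eq:defofpk}. Any valid proof must therefore be calibrated to be tight in both limits simultaneously.

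The main obstacle is the proof of the MGF inequality in the interior. A naive Hoeffding-type domination (sampling without replacement dominated by sampling with replacement) gives only $\mathbb{E}_\mu[e^{\lambda T}]\leq\cosh^k(\lambda/k)$, which is strictly weaker than the desired bound near $\lambda=0$ whenever $k\geq 2$ (as $p_k>k$ there), so no single-step comparison with an i.i.d.\ Rademacher sum can close the gap. I would instead work with the exact generating-function identity
\[
\mathbb{E}_\mu\!\left[e^{\lambda T}\right]\;=\;\frac{e^{\lambda}}{\binom{2k}{k}}\,[z^k]\Bigl((1+z)(1+e^{-2\lambda/k}z)\Bigr)^k,
\]
interpreting the product as the transfer matrix of the lazy non-symmetric walk whose increments $0,1,2$ carry respective weights $1,\,1+e^{-2\lambda/k},\,e^{-2\lambda/k}$, and then attempt either an induction on $k$ that preserves the precise ratio of the two sides at the extremes, or a pathwise coupling of this walk with a Rademacher convolution of effective length $p_k$. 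The sharp constant $p_k=\log_2\binom{2k}{k}$ is forced to arise as the unique value calibrating both the small-$\lambda$ variance comparison and the large-$|\lambda|$ atomic comparison simultaneously, and this is the delicate analytic point on which the argument turns.
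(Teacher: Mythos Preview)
Your reformulation is correct and elegant: the change of variables to $\lambda$ and the identification of the hypergeometric law $\mu(j)=\binom{k}{j}^2/\binom{2k}{k}$ turn \eqref{eq:mainineq} into the MGF bound $\mathbb{E}_\mu[e^{\lambda T}]\leq\cosh^{p_k}(\lambda/p_k)$, and your checks of tightness at $\lambda=0$ and $|\lambda|\to\infty$ are accurate. The generating-function identity you write down is also right.

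But this is not a proof; it is a restatement together with an honest list of obstacles. You explicitly name ``the proof of the MGF inequality in the interior'' as the main difficulty and then only say you ``would \ldots\ attempt either an induction on $k$ \ldots\ or a pathwise coupling.'' Neither attempt is carried out, and neither is evidently feasible. An induction on $k$ faces the problem that $p_k$ is an irrational, $k$-dependent exponent with no clean recursion, so there is no obvious inductive relation between the two sides for consecutive $k$. A ``Rademacher convolution of effective length $p_k$'' is not a well-defined object when $p_k\notin\mathbb{Z}$, and you give no indication of what coupling would certify the inequality with the exact constant. Your correct observation that Hoeffding gives only $\cosh^k(\lambda/k)$, which is \emph{too weak} near $\lambda=0$ for $k\ge2$, already shows that any soft probabilistic domination will miss the sharp exponent; something genuinely analytic is needed.

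For comparison, the paper's argument is entirely different and does close the gap. After the same homogeneity reduction it studies $f_k(x)=\sum_j\binom{k}{j}^2(1-x)^{p_k(k-j)/k}x^{p_kj/k}$ on $[0,1]$ and (i) proves $f_k(x)\le1$ on $[0,1/10]$ by a termwise comparison $\binom{k}{j}(1-x)^{p_k(k-j)/k}x^{p_kj/k}\le(1-x)^{k-j}x^j$, handled via Stirling and a fair amount of case analysis; (ii) derives an explicit second-order ODE $a_k f_k''+b_k f_k'+p_k c_k f_k=0$ (coming from the Legendre differential equation after the substitution that produces $f_k$); (iii) shows $c_k(x)<0$ on $[1/10,1/2)$. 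A maximum of $f_k$ strictly above $1$ in $(1/10,1/2)$ would then force a sign contradiction in the ODE. The analytic crux you correctly isolate --- the interior --- is resolved not by induction or coupling but by this ODE/maximum-principle trick, with the boundary layer $[0,1/10]$ handled separately. Your plan, as written, contains no substitute for either of these two ingredients.
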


By taking $a=b=1$ one clearly sees that formula \eqref{eq:defofpk} gives the smallest possible exponent $p_k$ such that estimate \eqref{eq:mainineq} can hold.
The particular case $k=2$ of this estimate was established by Kane and Tao \cite[Lemma~8]{KT17}, while the authors of \cite{DGIM21} verified it for all $k\leq10$ in \cite[Lemma~5]{DGIM21}, with a comment that they also performed verification for $k\leq100$ with an aid of a computer \cite[Remark~12]{DGIM21}.
The paper \cite{KT17} calls \eqref{eq:mainineq} simply \emph{an elementary inequality}, while \cite{DGIM21} also calls it \emph{a subtle
inequality for the Legendre polynomials}.
Namely, if $P_k$ denote the Legendre polynomials,
\[ P_k(z) = \frac{1}{2^k} \sum_{j=0}^k \binom{k}{j}^2 (z+1)^{k-j} (z-1)^{j}, \]
see \cite[Formula~18.5.8]{NIST}, then inequality \eqref{eq:mainineq} can be rephrased as
\begin{equation*}
P_k(z) \leq \bigg( \Big(\frac{z+1}{2}\Big)^{k/p_k} + \Big(\frac{z-1}{2}\Big)^{k/p_k} \bigg)^{p_k} \quad\text{for } z\in[1,\infty);
\end{equation*}
see the details in \cite{DGIM21}.

\smallskip
The importance of Theorem~\ref{thm:main} comes from the following application in additive combinatorics.
For a positive integer $k$ the notion of \emph{$k$-additive energy} $E_k(A)$ of a finite set $A\subset\mathbb{Z}^d$ was defined in the paper \cite{DGIM21} as the number of $2k$-tuples $(a_1,\ldots,a_{2k})\in A^{2k}$ such that $a_1+\cdots+a_{k}=a_{k+1}+\cdots+a_{2k}$.
In the particular case $k=2$ this specializes to the well-known concept of the \emph{additive energy}; see \cite[Section~2.3]{TV06}.
Let $|A|$ denote the cardinality of $A$.

\begin{corollary}\label{cor:additive}
Take positive integers $d,k$ and let $p_k$ be as in \eqref{eq:defofpk}. For every set $A\subseteq\{0,1\}^d\subset\mathbb{Z}^d$ we have
\begin{equation}\label{eq:mainadditive}
E_k(A)\leq|A|^{p_k}.
\end{equation}
\end{corollary}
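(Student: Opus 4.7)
The plan is to induct on the ambient dimension $d$, reducing the multidimensional bound to a one-dimensional Fourier expansion in the last coordinate and then invoking Theorem~\ref{thm:main} to collapse the resulting binomial sum. For the base case $d=1$ there are only four choices for $A\subseteq\{0,1\}$; the cases $|A|\le 1$ are trivial, and for $A=\{0,1\}$ one computes directly
\[
E_k(\{0,1\})=\sum_{j=0}^{k}\binom{k}{j}^2=\binom{2k}{k}=2^{p_k},
\]
so \eqref{eq:mainadditive} holds with equality and the base case is settled.

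For the inductive step with $d\ge 2$, I split $A=A_0\sqcup A_1$ according to the last coordinate, setting $A_\varepsilon=\{x\in\{0,1\}^{d-1}:(x,\varepsilon)\in A\}$, so that $|A|=|A_0|+|A_1|$. Using Plancherel on $\mathbb{T}^d$ I write $E_k(A)=\int_{\mathbb{T}^d}|\widehat{\mathbf 1_A}(\xi)|^{2k}\,d\xi$ and factor
\[
\widehat{\mathbf 1_A}(\xi',\xi_d)=F(\xi')+e^{2\pi i\xi_d}G(\xi'),\qquad F:=\widehat{\mathbf 1_{A_0}},\ G:=\widehat{\mathbf 1_{A_1}}.
\]
Binomial expansion of $(F+e^{2\pi i\xi_d}G)^k(\overline F+e^{-2\pi i\xi_d}\overline G)^k$ followed by integration in $\xi_d$ kills all off-diagonal cross terms and leaves
\[
E_k(A)=\sum_{j=0}^{k}\binom{k}{j}^2\int_{\mathbb{T}^{d-1}}|F(\xi')|^{2(k-j)}|G(\xi')|^{2j}\,d\xi'.
\]

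To each summand I apply H\"older's inequality with conjugate exponents $k/(k-j)$ and $k/j$ (with the usual convention at the endpoints $j\in\{0,k\}$), obtaining the bound $E_k(A_0)^{(k-j)/k}E_k(A_1)^{j/k}$, where I used $\int|F|^{2k}=E_k(A_0)$ and $\int|G|^{2k}=E_k(A_1)$. Feeding in the inductive hypothesis $E_k(A_\varepsilon)\le|A_\varepsilon|^{p_k}$ and then applying Theorem~\ref{thm:main} with $a=|A_0|$ and $b=|A_1|$ yields
\[
E_k(A)\le\sum_{j=0}^{k}\binom{k}{j}^2|A_0|^{p_k(k-j)/k}|A_1|^{p_k j/k}\le(|A_0|+|A_1|)^{p_k}=|A|^{p_k},
\]
closing the induction. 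The reduction is a standard Fourier/combinatorial unfolding and presents no real difficulty; the sole substantive ingredient is Theorem~\ref{thm:main}, whose role is precisely to swallow the binomial sum into the sharp power $(|A_0|+|A_1|)^{p_k}$, so the genuine obstacle has already been surmounted in the main theorem.
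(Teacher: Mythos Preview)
Your argument is correct and is precisely the induction-on-$d$ reduction that the paper defers to \cite[Section~3]{DGIM21} (and to \cite[Section~2]{KT17} for $k=2$): split $A$ by the last coordinate, obtain the identity $E_k(A)=\sum_j\binom{k}{j}^2\int|F|^{2(k-j)}|G|^{2j}$, apply H\"older and the inductive hypothesis, and close with Theorem~\ref{thm:main}. The paper does not write out this deduction itself, so there is nothing further to compare.
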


By taking $A=\{0,1\}^d$ one again sees that $p_k$ is the smallest possible exponent such that inequality \eqref{eq:mainadditive} can hold.
This sharp estimate was conjectured by de Dios Pont, Greenfeld, Ivanisvili, and Madrid, who also showed how it can be derived from \eqref{eq:mainineq} by using it in the step of the mathematical induction on the dimension $d$; see \cite[Section~3]{DGIM21}. The same deduction was previously performed for $k=2$ by Kane and Tao \cite[Section~2]{KT17}.
Thus, prior to the present paper, Corollary~\ref{cor:additive} has only been confirmed for small values of $k$, namely $k\leq100$; see \cite[Theorem~7]{KT17} and \cite[Theorem~3]{DGIM21}.

\smallskip
We use the opportunity to also give a probabilistic reformulation of inequality \eqref{eq:mainineq}, which will not be needed in its proof, but it might be interesting on its own. Suppose that $X_1,X_2,\ldots$ are independent identically distributed random variables that take values in $\{-1,0,1\}$ and satisfy $\mathbb{P}(X_1=0)=1/2$. Let $S_n=X_1+\cdots+X_n$ denote the associated random walk on $\mathbb{Z}$ starting at $0$. Such a process is often called \emph{a lazy simple random walk}, as it only makes a move with probability $1/2$; see for instance \cite{H01,MP12}.
Note that the distribution of this discrete stochastic process $(S_n)_{n=0}^\infty$ is uniquely determined by a single number, namely the probability of taking a step to the right, $\mathbb{P}(X_1=1)\in[0,1/2]$.

\begin{corollary}\label{cor:probab}
For every lazy simple random walk $(S_n)_{n=0}^\infty$ on $\mathbb{Z}$, every positive integer $k$, and for the number $p_k$ defined by \eqref{eq:defofpk}, the estimate
\begin{equation}\label{eq:probineq}
\mathbb{P}(S_k=0)^{1/p_k} \leq \mathbb{P}(S_k=-k)^{1/p_k} + \mathbb{P}(S_k=k)^{1/p_k}
\end{equation}
holds. Moreover, $p_k$ from \eqref{eq:defofpk} is the smallest number such that \eqref{eq:probineq} holds for a fixed $k$ and every lazy simple random walk $(S_n)_{n=0}^\infty$.
\end{corollary}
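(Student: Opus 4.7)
My plan is to reduce Corollary~\ref{cor:probab} to Theorem~\ref{thm:main} via one explicit identity. Parameterise the walk by $p:=\mathbb{P}(X_1=1)\in[0,1/2]$ and put $q:=1/2-p=\mathbb{P}(X_1=-1)$, so independence of the increments immediately gives $\mathbb{P}(S_k=k)=p^{k}$ and $\mathbb{P}(S_k=-k)=q^{k}$. The heart of the argument is the identity
\[
\mathbb{P}(S_k=0)\;=\;\sum_{j=0}^{k}\binom{k}{j}^{2}\,p^{\,k-j}\,q^{\,j},
\]
which will match the left-hand side of \eqref{eq:mainineq} after the substitution in Theorem~\ref{thm:main} described below.

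I would prove the identity by a coupling. Since $2p\leq 1$, let $\epsilon_i\sim\mathrm{Bernoulli}(2p)$ and $\eta_i\sim\mathrm{Bernoulli}(1/2)$ be mutually independent, and set $X'_i:=\epsilon_i-\eta_i$. A short check of the three point masses shows $X'_i\stackrel{\mathrm{d}}{=}X_i$, hence $S_k\stackrel{\mathrm{d}}{=}E_k-H_k$ with $E_k\sim\mathrm{Binomial}(k,2p)$ and $H_k\sim\mathrm{Binomial}(k,1/2)$ independent. Expanding $\mathbb{P}(E_k=H_k)$ by summing over the common value $j$ and cancelling the powers of $2$ yields the displayed identity. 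A purely algebraic alternative is the factorisation $pz+qz^{-1}+1/2=(z+1)(pz+q)/z$, after which the identity is just the coefficient of $z^{k}$ in $(z+1)^{k}(pz+q)^{k}$.

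With this identity in hand, I would apply Theorem~\ref{thm:main} with $a:=p^{k/p_k}$ and $b:=q^{k/p_k}$. The exponents on the left-hand side of \eqref{eq:mainineq} collapse so it becomes exactly $\mathbb{P}(S_k=0)$, while the right-hand side equals $\bigl(p^{k/p_k}+q^{k/p_k}\bigr)^{p_k}=\bigl(\mathbb{P}(S_k=k)^{1/p_k}+\mathbb{P}(S_k=-k)^{1/p_k}\bigr)^{p_k}$. Extracting $p_k$-th roots yields \eqref{eq:probineq}.

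For the optimality of $p_k$ I would specialise to the symmetric lazy walk $p=q=1/4$. There $\mathbb{P}(S_k=\pm k)=4^{-k}$, while the identity combined with Vandermonde's convolution gives $\mathbb{P}(S_k=0)=\binom{2k}{k}4^{-k}$. Substituting these into a hypothetical version of \eqref{eq:probineq} with $r$ in place of $p_k$ forces $\binom{2k}{k}^{1/r}\leq 2$, i.e.\ $r\geq\log_2\binom{2k}{k}=p_k$. There is no real technical obstacle here: the only thing to spot is that a lazy simple random walk is distributionally the difference of two independent Bernoulli sums, after which the corollary is a direct translation of Theorem~\ref{thm:main}.
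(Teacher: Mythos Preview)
Your proof is correct and follows essentially the same route as the paper: both represent $X_i$ as the difference of two independent Bernoulli variables (your $\epsilon_i-\eta_i$ is the paper's $Y_i-Z_i$ up to relabelling), derive the identity $\mathbb{P}(S_k=0)=\sum_j\binom{k}{j}^2 p^{k-j}q^{j}$, and then substitute $a=p^{k/p_k}$, $b=q^{k/p_k}$ into Theorem~\ref{thm:main}. Your optimality argument via the symmetric walk $p=q=1/4$ is exactly the specialisation $a=b$ that the paper invokes when it refers back to the sharpness of \eqref{eq:mainineq}.
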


The number $\mathbb{P}(S_k=-k)$ (resp.\@ $\mathbb{P}(S_k=k)$) can be interpreted as the probability that the first $k$ steps of the random walk are all made to the left (resp.\@ right). Clearly, $\mathbb{P}(S_k=0)$ is the probability that, after $k$ steps, the random walk ends up at its starting point.

\smallskip
The proof of Theorem~\ref{thm:main} is given in Section~\ref{sec:theorem}.
It only needs basic single-variable calculus. A crucial ingredient is a second-order ordinary differential equation coming from the classical differential equation for the Legendre polynomials, even though we do not work with special polynomials at all.

Mathematica \cite{Mathematica} is used extensively in two different ways.
First, finitely many inequalities for concrete real numbers are verified as parts of the proofs of Lemmata~\ref{lm:easyest} and \ref{lm:cisnegative}, by computing relevant numerical expressions using infinite precision (i.e., tracking the propagation of the numerical error) and sufficient accuracy (which is in all of our cases chosen to be $10$ accurate digits after the leading zeros).
In other words, a numerical expression \verb+e+ is always approximated using the command \verb+N[e,{+$\infty$\verb+,10}]+.
Second, symbolic differentiation via the command \verb+D+ and algebraic simplification via the command \verb+Simplify+ are used in the proof of Lemma~\ref{lm:differential}.
All these operations are perfectly reliable. Note that we do not rely on testing infinitely many inequalities for real numbers, or on any sketches of graphs of functions.

Corollary~\ref{cor:probab} is established in Section~\ref{sec:corollary} by showing that \eqref{eq:probineq} is just another restatement of the elementary inequality \eqref{eq:mainineq}.
Finally, Section~\ref{sec:means} gives yet another reformulation of \eqref{eq:mainineq}, in terms of the means of a pair of nonnegative numbers, suggested to the author by Jairo Bochi.


\section{Proof of Theorem~\ref{thm:main}}
\label{sec:theorem}
For each positive integer $k$ let us define a function $f_k\colon[0,1]\to[0,\infty)$ by the formula
\begin{equation}\label{eq:mdefoffk}
f_k(x) := \sum_{j=0}^{k} \dbinom{k}{j}^2 (1-x)^{p_k(k-j)/k} x^{p_k j/k}.
\end{equation}
When $x=0$ or $x=1$, the expression $0^0$ is interpreted as $1$, as is common in relation with discrete sums.
The desired inequality \eqref{eq:mainineq} is homogeneous of order $p_k$ in $a,b$. Thus, one can additionally assume $a+b=1$ and then \eqref{eq:mainineq} simply reads
\begin{equation}\label{eq:mainineq2}
f_k(x)\leq 1 \quad\text{for every } x\in[0,1].
\end{equation}
Note that \eqref{eq:mainineq2} is trivial for $k=1$, since $f_1$ is identically equal to $1$. Throughout this section we assume that $k\geq2$ is a fixed integer.

The following exact form of Stirling's formula is shown in \cite{Rob55}:
\begin{equation}\label{eq:Stirling}
(2\pi)^{1/2} n^{n+1/2} e^{-n+1/(12n+1)} < n! < (2\pi)^{1/2} n^{n+1/2} e^{-n+1/12n}
\end{equation}
for every positive integer $n$.
From \eqref{eq:Stirling} we get
\[ \frac{2^{2k}}{\sqrt{\pi k}} e^{-1/6k} < \binom{2k}{k} < \frac{2^{2k}}{\sqrt{\pi k}}, \]
so taking logarithms and using $(\log_2 e)/6<1/4$ yields
\begin{equation}\label{eq:ineqforpk}
2k - \frac{1}{2}\log_2(\pi k) - \frac{1}{4k} < p_k < 2k - \frac{1}{2}\log_2(\pi k).
\end{equation}
In particular, we certainly have
\begin{equation}\label{eq:ineqforpk2}
k < p_k < 2k - 1.
\end{equation}

\begin{lemma}\label{lm:easyest}
For every $x\in[0,1/10]$ and $j\in\{0,1,\ldots,k\}$ we have
\begin{equation}\label{eq:binomials}
\dbinom{k}{j} (1-x)^{p_k(k-j)/k} x^{p_k j/k} \leq (1-x)^{k-j} x^j.
\end{equation}
Consequently, for $x\in[0,1/10]$ we also have $f_k(x) \leq 1$.
\end{lemma}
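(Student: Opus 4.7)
The plan is as follows. The case $j=0$ of \eqref{eq:binomials} reduces to $(1-x)^{p_k}\leq(1-x)^k$, which is immediate from $p_k>k$ (see \eqref{eq:ineqforpk2}) and $1-x\in[0,1]$. For $j\geq 1$ and $x=0$, both sides vanish. So we may assume $j\in\{1,\ldots,k\}$ and $x\in(0,1/10]$, and divide \eqref{eq:binomials} by $(1-x)^{k-j}x^j$ to obtain the equivalent form
\[ h_j(x):=\binom{k}{j}(1-x)^{\beta(k-j)}x^{\beta j}\leq 1, \qquad \beta:=\frac{p_k-k}{k}\in(0,1). \]
A short calculation yields $(\log h_j)'(x)=\beta(j-kx)/(x(1-x))$, so $h_j$ is strictly log-concave on $(0,1)$ with unique maximum at $x=j/k$. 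Its maximum over $[0,1/10]$ is therefore attained at $x^{\ast}:=\min\{j/k,\,1/10\}$, reducing the problem to verifying $h_j(x^{\ast})\leq 1$ for every admissible $j$.

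Two cases arise. If $j/k\geq 1/10$ (so $x^{\ast}=1/10$), the inequality reads $\binom{k}{j}(9/10)^{\beta(k-j)}(1/10)^{\beta j}\leq 1$. If $j/k<1/10$ (so $x^{\ast}=j/k$), writing $\rho=j/k$ and $H(\rho):=-\rho\log\rho-(1-\rho)\log(1-\rho)$, it becomes $\binom{k}{j}\leq e^{\beta k H(\rho)}$. In both cases the plan is to combine the Stirling upper bound $\binom{k}{j}\leq e^{kH(\rho)}/\sqrt{2\pi k\rho(1-\rho)}$ (a direct consequence of \eqref{eq:Stirling} up to a small exponential correction) with the sharp lower bound $2k-p_k<\tfrac{1}{2}\log_2(\pi k)+\tfrac{1}{4k}$ from \eqref{eq:ineqforpk}. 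The second case then reduces to $(2k-p_k)H(\rho)\leq\tfrac{1}{2}\log(2\pi k\rho(1-\rho))$, and the first case admits an analogous reduction to an inequality in $k$ and $j$ alone. Using $H(\rho)\leq\rho(1+\log(1/\rho))$ to control small $\rho$, and $1-\rho\geq 9/10$ throughout, these reduced inequalities hold analytically for every $k$ beyond some explicit constant $k_0$. The finitely many remaining pairs $(k,j)$ with $k\leq k_0$ will be verified by direct numerical computation in Mathematica with infinite-precision arithmetic, as described in the introduction.

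The ``consequently'' part is then immediate: multiplying \eqref{eq:binomials} by $\binom{k}{j}$ and summing in $j$ yields
\[ f_k(x)=\sum_{j=0}^k\binom{k}{j}^2(1-x)^{p_k(k-j)/k}x^{p_kj/k}\leq\sum_{j=0}^k\binom{k}{j}(1-x)^{k-j}x^j=1 \]
by the binomial theorem. The hardest part of the argument will be the regime $\rho\approx 1/10$, where the second-order Stirling correction and the fine lower bound on $p_k$ are both indispensable and leave essentially no slack; this is also where the transition between Cases~1 and 2 occurs.
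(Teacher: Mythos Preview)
Your plan is correct and follows essentially the same route as the paper: the same reduction via monotonicity of $h_j$ (equivalently, of the paper's $\theta$) to the point $x^\ast=\min\{j/k,1/10\}$, the same case split $j/k\lessgtr 1/10$, the same combination of the Stirling bound \eqref{eq:Stirbinom} with the sharp estimate \eqref{eq:ineqforpk} on $2k-p_k$, and the same strategy of handling large $k$ analytically and finitely many small $k$ by reliable numerical evaluation. The only minor executional difference is that in the case $j/k<1/10$ the paper, rather than bounding $H(\rho)$ directly, exploits the concavity of the auxiliary function $\phi_k(t)$ to reduce the required inequality to the two endpoints $t=1/k$ and $t=1/10$; this is a slightly cleaner bookkeeping device but not a different idea.
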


\begin{proof}
The first claim is trivial when $j=0$ or $j=k$ because of \eqref{eq:ineqforpk2}, so assume $1\leq j\leq k-1$.
The desired estimate \eqref{eq:binomials} can be written equivalently as
\begin{equation}\label{eq:smart1}
\theta(x) \geq 0 \quad \text{for } 0<x\leq1/10,
\end{equation}
where $\theta\colon(0,1)\to\mathbb{R}$ is defined by
\[ \theta(x) := -\Big(\frac{p_k}{k}-1\Big) (k-j) \log(1-x) -\Big(\frac{p_k}{k}-1\Big) j \log x - \log\dbinom{k}{j}. \]
From
\[ \theta'(x) = \Big(\frac{p_k}{k}-1\Big) \Big(\frac{k-j}{1-x}-\frac{j}{x}\Big) \]
we see that $\theta$ is decreasing on $(0,j/k]$ and increasing on $[j/k,1)$.

\smallskip
\emph{Case 1: $j\leq k/10$.}
Note that this case is, in fact, void unless $k\geq10$.
In this case, \eqref{eq:smart1} is equivalent with nonnegativity of $\theta$ at the point of its global minimum, namely $\theta(j/k)\geq0$, which transforms back into
\begin{equation}\label{eq:smart2}
\dbinom{k}{j} \leq \Big( \frac{k^k}{j^j (k-j)^{k-j}} \Big)^{p_k/k-1}.
\end{equation}
Stirling's formula \eqref{eq:Stirling} together with an easy inequality
\[ \frac{1}{12k} - \frac{1}{12j+1} - \frac{1}{12(k-j)+1} \leq 0 \]
gives
\begin{equation}\label{eq:Stirbinom}
\dbinom{k}{j} < \frac{1}{\sqrt{2\pi}} \frac{k^{k+1/2}}{j^{j+1/2} (k-j)^{k-j+1/2}}.
\end{equation}
Because of this, we will have \eqref{eq:smart2} once we can show
\[ \frac{1}{\sqrt{2\pi}} \frac{k^{k+1/2}}{j^{j+1/2} (k-j)^{k-j+1/2}} \leq \Big( \frac{k^k}{j^j (k-j)^{k-j}} \Big)^{p_k/k-1}, \]
but this transforms into
\[ \Big(\frac{j}{k}\Big)^{(2k-p_k)j/k+1/2} \Big(1-\frac{j}{k}\Big)^{(2k-p_k)(k-j)/k+1/2} \geq \frac{1}{\sqrt{2\pi k}}. \]
Thanks to the constraint $1\leq j\leq k/10$, by substituting $t=j/k$ verification of the last display reduces to
\[ t^{(2k-p_k)t+1/2} (1-t)^{(2k-p_k)(1-t)+1/2} \geq \frac{1}{\sqrt{2\pi k}} \quad\text{for } t\in\Big[\frac{1}{k},\frac{1}{10}\Big], \]
i.e., to
\begin{equation}\label{eq:smart4}
\phi_k(t) \geq0 \quad\text{for } t\in\Big[\frac{1}{k},\frac{1}{10}\Big],
\end{equation}
where
\[ \phi_k(t) := \Big((2k-p_k)t+\frac{1}{2}\Big) \log t + \Big((2k-p_k)(1-t)+\frac{1}{2}\Big) \log(1-t) + \frac{1}{2}\log(2\pi k). \]
Differentiating
\begin{align*}
\phi_k'(t) & = -(2k-p_k) \log\frac{1-t}{t} + \frac{1-2t}{2(1-t)t}, \\
\phi_k''(t) & = \frac{2k-p_k}{(1-t)t} - \frac{2t^2-2t+1}{2(1-t)^2 t^2}
\end{align*}
and denoting
\[ t_k := \frac{1}{2}-\frac{1}{2}\sqrt{1-\frac{2}{2k+1-p_k}} \]
we easily see, thanks to \eqref{eq:ineqforpk} and $k\geq10$:
\begin{align*}
& \text{$\phi_k$ is concave on $(0,t_k]$}; \\
& \text{$\phi_k'$ is increasing on $[t_k,1/2]$}; \\
& \phi_k'\Big(\frac{1}{10}\Big) < - \frac{\log_2(10\pi)}{2}\log 9 + \frac{40}{9} < 0.
\end{align*}
From these three claims we conclude that the verification of \eqref{eq:smart4} reduces to
\begin{equation}\label{eq:smart5}
\phi_k\Big(\frac{1}{k}\Big) \geq0
\end{equation}
and
\begin{equation}\label{eq:smart6}
\phi_k\Big(\frac{1}{10}\Big) \geq0.
\end{equation}
Namely, if $t_k\geq1/10$, then just from the concavity of $\phi_k$ on $(0,t_k]$ we get
\begin{equation}\label{eq:refclear}
\phi_k(t) \geq \min \Big\{\phi_k\Big(\frac{1}{k}\Big), \phi_k\Big(\frac{1}{10}\Big) \Big\}
\end{equation}
for every $t\in[1/k,1/10]\subset(0,t_k]$.
Next, if $1/k\leq t_k<1/10$, then this concavity only gives
\begin{equation*}
\phi_k(t) \geq \min \Big\{\phi_k\Big(\frac{1}{k}\Big), \phi_k(t_k) \Big\}
\end{equation*}
for every $t\in[1/k,t_k]$.
However, since $\phi_k'$ is increasing on $[t_k,1/10]$ and still negative on the right end of that interval, we conclude that $\phi_k$ is, in fact, decreasing on that same interval. Consequently,
\[ \phi_k(t) \geq \phi_k\Big(\frac{1}{10}\Big) \]
for every $t\in[t_k,1/10]$, so we again end up having \eqref{eq:refclear} for every $t\in[1/k,1/10]$.
Finally, if $t_k<1/k$, then the last monotonicity argument suffices and leads us immediately to \eqref{eq:refclear} on the whole interval $[1/k,1/10]$ again.

Therefore, it remains to establish \eqref{eq:smart5} and \eqref{eq:smart6}.

\smallskip
\emph{Proof of \eqref{eq:smart5}.}
Mathematica verifies \eqref{eq:smart5} for $10\leq k\leq 99$, so we can assume that $k\geq100$.
Using
\[ \log(1-1/k)\geq-101/100k, \quad \log k<\frac{\sqrt{k}}{2}, \quad \log_2(\pi k)+1 <\sqrt{k}, \]
and \eqref{eq:ineqforpk} we easily get
\begin{align*}
\phi_k\Big(\frac{1}{k}\Big) & = \frac{\log(2\pi)}{2} - \frac{(2k-p_k)\log k}{k} + \Big(2k-p_k+\frac{p_k}{2}-\frac{3}{2}\Big)\log\Big(1-\frac{1}{k}\Big) \\
& > \frac{\log(2\pi)}{2} - \frac{\log k}{2k} \Big(\log_2(\pi k) + 1\Big) - \frac{101}{200k}\Big(\log_2(\pi k) + 1\Big) \\
& > \frac{\log(2\pi)}{2} - \frac{601}{2000} > 0,
\end{align*}
so \eqref{eq:smart5} follows.

\smallskip
\emph{Proof of \eqref{eq:smart6}.}
Note that \eqref{eq:smart6} can be rewritten back as
\begin{equation}\label{eq:smart7}
\frac{3^{9(2k-p_k)/5+1}}{10^{2k-p_k+1}} \geq \frac{1}{\sqrt{2\pi k}}.
\end{equation}
Using \eqref{eq:ineqforpk}, recalling $k\geq10$, and observing
\[ \frac{3}{10^{41/40}} > \frac{1}{4}, \quad
\frac{9}{10}\log_2 3 - \frac{1}{2}\log_2 10 > -\frac{1}{4} \]
we can write
\begin{align*}
\frac{3^{9(2k-p_k)/5+1}}{10^{2k-p_k+1}}
& > \frac{3^{(9/10)\log_2(\pi k)+1}}{10^{(1/2)\log_2(\pi k)+41/40}}
= \frac{3}{10^{41/40}} (\pi k)^{(9/10)\log_2 3 - (1/2)\log_2 10} \\
& > \frac{1}{4} (\pi k)^{-1/4} > (2\pi k)^{-1/2},
\end{align*}
so \eqref{eq:smart7}, and thus also \eqref{eq:smart6}, is proven too.

\smallskip
\emph{Case 2: $j> k/10$.}
In this case, \eqref{eq:smart1} is the same as $\theta(1/10)\geq0$, which can be rewritten as
\begin{equation}\label{eq:smart3}
\dbinom{k}{j} 9^{(1-p_k/k)j} \Big(\frac{9}{10}\Big)^{p_k-k} \leq 1.
\end{equation}
Let us forget about the standing assumption $j>k/10$ and prove \eqref{eq:smart3} for all $1\leq j\leq k-1$.
Denote
\[ \omega_k := 9^{1-p_k/k}. \]
By direct comparison of
\[ \dbinom{k}{j-1} \omega_k^{j-1} \quad\text{and}\quad \dbinom{k}{j} \omega_k^{j} \]
we see that $\binom{k}{j}\omega_k^j$ is maximized for
\begin{equation}\label{eq:smart8}
j = j_k = \Big\lfloor \frac{\omega_k (k+1)}{1+\omega_k} \Big\rfloor.
\end{equation}
Therefore, \eqref{eq:smart3} only needs to be verified for the particular index \eqref{eq:smart8}.
Mathematica verifies \eqref{eq:smart3} for $2\leq k\leq 49$, so we can assume that $k\geq50$.

A simple auxiliary inequality
\begin{equation}\label{eq:smart9}
9^{1-t} \leq 10 (1-t)^{1-t} t^t \quad\text{for } t\in(0,1)
\end{equation}
easily follows by observing that
\[ t \mapsto (1-t)\log 9 - \log 10 - (1-t)\log(1-t) - t\log t \]
attains its maximum at $t=1/10$.
From $k\geq50$ and \eqref{eq:ineqforpk} we know that
\[ p_k > 2k - \frac{1}{2}\log_2(\pi k) - \frac{1}{200}, \]
so estimating as in \eqref{eq:Stirbinom} we get
\begin{align*}
& \dbinom{k}{j_k} 9^{(1-p_k/k)j_k} \Big(\frac{9}{10}\Big)^{p_k-k} = \dbinom{k}{j_k} 9^{(k-p_k)j_k/k} \Big(\frac{10}{9}\Big)^{k-p_k} \\
& < \frac{1}{\sqrt{2\pi}} \frac{k^{k+1/2}}{j_k^{j_k+1/2} (k-j_k)^{k-j_k+1/2}}
9^{(-k + (1/2)\log_2(\pi k) + 1/200)j_k/k} \Big(\frac{10}{9}\Big)^{-k + (1/2)\log_2(\pi k) + 1/200} \\
& < \frac{10^{1/200}}{\sqrt{2\pi k}} \frac{1}{(j_k/k)^{j_k+1/2} (1-j_k/k)^{k-j_k+1/2}}
9^{(-k + (1/2)\log_2(\pi k))j_k/k} \Big(\frac{10}{9}\Big)^{-k+(1/2)\log_2(\pi k)}.
\end{align*}
Substituting $t=j_k/k$ the last expression becomes
\begin{equation}\label{eq:smart10}
\frac{10^{1/200}}{\sqrt{2(1-t)t}} \Big(\frac{9^{1-t}}{10 (1-t)^{1-t} t^t}\Big)^{k} (\pi k)^{(1/2)(t-1)\log_2 9 + (1/2)\log_2 10 - 1/2}.
\end{equation}
Moreover
\[ -1 < 1-\frac{p_k}{k} < -\frac{9}{10}
\quad\Longrightarrow\quad \frac{1}{9} < \omega_k < \frac{1}{7}
\quad\Longrightarrow\quad \frac{41}{500} < t < \frac{51}{400}. \]
From this and \eqref{eq:smart9} we see that \eqref{eq:smart10} is at most
\[ \frac{10^{1/200}}{\sqrt{2(1-41/500)41/500}} (50\pi)^{(1/2)(51/400-1)\log_2 9 + (1/2)\log_2 10 - 1/2} < 1, \]
which finishes the proof of \eqref{eq:smart3}.

\smallskip
For the second claim in the lemma formulation we only need to use \eqref{eq:mdefoffk}, \eqref{eq:binomials}, and the binomial theorem:
\begin{equation*}
f_k(x) \leq \sum_{j=0}^{k} \dbinom{k}{j} (1-x)^{k-j} x^j = (1-x+x)^k = 1. \qedhere
\end{equation*}
\end{proof}

\begin{lemma}\label{lm:differential}
For every $x\in(0,1)$ we have
\begin{equation}\label{eq:diffeq}
a_k(x) f_k''(x) + b_k(x) f_k'(x) + p_k c_k(x) f_k(x) = 0,
\end{equation}
where $a_k,b_k,c_k\colon(0,1)\to\mathbb{R}$ are the functions defined as
\begin{align}
a_k(x) & := (1-x)^2 x^2 \big((1-x)^{p_k/k}-x^{p_k/k}\big)^2, \nonumber \\
b_k(x) & := (1-x) x \big((1-x)^{p_k/k}-x^{p_k/k}\big) \nonumber \\
& \qquad\times \Big((1-x)^{p_k/k}\big(1+2(p_k-1)x\big)+x^{p_k/k}\big(1+2(p_k-1)(1-x)\big)\Big), \nonumber \\
c_k(x) & := (1-x)^{2p_k/k} x \big(1+(p_k-1)x\big) + x^{2p_k/k} (1-x) \big(1+(p_k-1)(1-x)\big) \nonumber \\
& \qquad - (1-x)^{p_k/k} x^{p_k/k} \big(p_k-2(p_k-1)(1-x)x\big)  . \label{eq:defofck}
\end{align}
\end{lemma}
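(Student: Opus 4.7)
The strategy is to reduce \eqref{eq:diffeq} to the classical Legendre differential equation
\[
(1-z^2)\, P_k''(z) - 2 z\, P_k'(z) + k(k+1)\, P_k(z) = 0
\]
via a change of variables. Introduce auxiliary functions $u(x) := (1-x)^{p_k/k}$, $v(x) := x^{p_k/k}$, $W(x) := u - v$, and $z(x) := (u+v)/W$. Using the identities $(z+1)/2 = u/W$ and $(z-1)/2 = v/W$, the explicit formula $P_k(z) = 2^{-k}\sum_{j=0}^k \binom{k}{j}^2 (z+1)^{k-j}(z-1)^j$ recalled in the introduction yields the factorization
\[
f_k(x) = W(x)^k\, P_k\!\big(z(x)\big) \qquad \text{for every } x \in (0,1)\setminus\{1/2\}.
\]
So \eqref{eq:diffeq} is, at its core, Legendre's equation transported through this substitution, and it suffices to verify it on $(0,1)\setminus\{1/2\}$, with the single point $x=1/2$ handled by continuity of $f_k, f_k', f_k''$ and of the coefficients $a_k, b_k, c_k$.

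The plan from here is to differentiate $f_k = W^k P_k(z)$ twice by the product and chain rules, using $u' = -(p_k/k)\,u/(1-x)$ and $v' = (p_k/k)\,v/x$ together with their consequences for $W', W'', z', z''$. This expresses $f_k'(x)$ and $f_k''(x)$ as explicit linear combinations of $P_k(z)$, $P_k'(z)$, and $P_k''(z)$ whose coefficients are algebraic functions of $x$, $u$, $v$. Substituting into the left-hand side of \eqref{eq:diffeq} and eliminating $P_k''(z)$ using Legendre's ODE, together with the identity $1 - z^2 = -4uv/W^2$, reduces the claim to an equation of the form $A(x)\, P_k(z) + B(x)\, P_k'(z) = 0$. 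Since $P_k$ and $P_k'$ are linearly independent as polynomials in $z$, it suffices to establish $A \equiv 0$ and $B \equiv 0$. After multiplying through by the common denominator $x^2(1-x)^2 W^2$, which is precisely the form of $a_k$ in \eqref{eq:defofck}, both identities become polynomial relations in the three quantities $x$, $u$, $v$ (with $p_k$ entering as a parameter).

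The main obstacle is the sheer volume of the algebraic bookkeeping: the coefficients $a_k, b_k, c_k$ in \eqref{eq:defofck} are engineered so that these polynomial identities hold term by term, but confirming this by hand would require expanding many terms generated by the product and chain rules, with a high risk of sign or transcription error. This is exactly the step that Mathematica's symbolic differentiation \verb+D+ and algebraic simplification \verb+Simplify+ are designed to handle reliably, as signaled in the introduction. Once $A$ and $B$ are computed symbolically and simplified, their vanishing can be confirmed without any numerical approximation, completing the verification of \eqref{eq:diffeq}.
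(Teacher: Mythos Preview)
Your approach is correct but differs from the paper's actual proof. The paper applies the operator $a_k(x)\,\partial^2 + b_k(x)\,\partial + p_k c_k(x)$ directly to each monomial $(1-x)^{p_k(k-j)/k} x^{p_k j/k}$, obtains via Mathematica the closed form
\[
\frac{p_k^2}{k^2}\big((1-x)^{p_k/k}-x^{p_k/k}\big)(1-x)^{p_k(k-j)/k} x^{p_k j/k}\big(j^2(1-x)^{p_k/k}-(k-j)^2 x^{p_k/k}\big),
\]
and then sums over $j$ with weights $\binom{k}{j}^2$; the identity $\binom{k}{j}^2 j^2 = k^2\binom{k-1}{j-1}^2$ and an index shift make the two resulting sums cancel. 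So the paper's argument is term-by-term and combinatorial, with the Legendre connection appearing only in the subsequent Remark, which explicitly states that your substitution $f_k(x) = W(x)^k P_k\big((u+v)/W\big)$ together with the Legendre ODE is how the author \emph{discovered} \eqref{eq:diffeq}. Your route trades the binomial telescoping for a change-of-variables computation; both delegate the heavy algebra to Mathematica, but the paper's version yields an explicit intermediate formula that makes the vanishing transparent without invoking properties of $P_k$ at all.

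One small logical remark: the sentence ``Since $P_k$ and $P_k'$ are linearly independent as polynomials in $z$, it suffices to establish $A\equiv 0$ and $B\equiv 0$'' is misleading. Sufficiency of $A=B=0$ is trivial and needs no linear independence; what linear independence would give you is \emph{necessity}, but even that does not follow here, since $A(x)$ and $B(x)$ are functions of $x$, not constants, and $z=z(x)$ is constrained. This does not affect the validity of your argument, because you go on to verify $A\equiv 0$ and $B\equiv 0$ directly, but the justification as written should be dropped or rephrased.
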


\begin{proof}
Using symbolic differentiation and algebraic simplification in Mathematica we obtain
\begin{align*}
& a_k(x) \Big(\frac{\textup{d}}{\textup{d}x}\Big)^2\big( (1-x)^{p_k(k-j)/k} x^{p_k j/k} \big)
+ b_k(x) \frac{\textup{d}}{\textup{d}x}\big( (1-x)^{p_k(k-j)/k} x^{p_k j/k} \big) \\
& \quad + p_k c_k(x) (1-x)^{p_k(k-j)/k} x^{p_k j/k} \\
& = \frac{p_k^2}{k^2} \big( (1-x)^{p_k/k} - x^{p_k/k} \big) (1-x)^{p_k(k-j)/k} x^{p_k j/k}
\big( j^2 (1-x)^{p_k/k} - (k-j)^2 x^{p_k/k} \big).
\end{align*}
It remains to multiply with $\binom{k}{j}^2$ and sum over $j=0,1,\ldots,k$ using the defining formula \eqref{eq:mdefoffk} to get
{\allowdisplaybreaks
\begin{align*}
& a_k(x) f_k''(x) + b_k(x) f_k'(x) + p_k c_k(x) f_k(x) \\
& = \frac{p_k^2}{k^2} \big( (1-x)^{p_k/k} - x^{p_k/k} \big) \\
& \quad\times \bigg( \sum_{j=0}^{k} \dbinom{k}{j}^2 j^2 (1-x)^{p_k(k-j+1)/k} x^{p_k j/k} - \sum_{j=0}^{k} \dbinom{k}{j}^2 (k-j)^2 (1-x)^{p_k(k-j)/k} x^{p_k(j+1)/k} \bigg) \\
& = p_k^2 \big( (1-x)^{p_k/k} - x^{p_k/k} \big) \\
& \quad\times \bigg( \sum_{j=1}^{k} \dbinom{k-1}{j-1}^2 (1-x)^{p_k(k-j+1)/k} x^{p_k j/k} - \sum_{j=0}^{k-1} \dbinom{k-1}{j}^2 (1-x)^{p_k(k-j)/k} x^{p_k(j+1)/k} \bigg) = 0,
\end{align*}
}
which proves \eqref{eq:diffeq}.
\end{proof}

\begin{remark}
Differential equation \eqref{eq:diffeq} can alternatively be deduced from the well-known second-order equation for the Legendre polynomials, namely
\[ (1-z^2)P_k''(z) - 2z P_k'(z) + k(k+1) P_k(z) = 0, \]
see \cite[Table~18.8.1, Row~1]{NIST}, by writing $f_k$ as
\[ f_k(x) = \big((1-x)^{p_k/k}-x^{p_k/k}\big)^k P_k\Big(\frac{(1-x)^{p_k/k}+x^{p_k/k}}{(1-x)^{p_k/k}-x^{p_k/k}}\Big). \]
In fact, this is precisely the way the author arrived at \eqref{eq:diffeq}.
\end{remark}

\begin{lemma}\label{lm:cisnegative}
If $c_k$ is defined by the formula \eqref{eq:defofck}, then for every $x\in[1/10,1/2)$ we have $c_k(x)<0$.
\end{lemma}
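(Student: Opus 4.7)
My plan is to factor $c_k$ by regarding it as a homogeneous quadratic form in the pair $A=(1-x)^{p_k/k}$, $B=x^{p_k/k}$. Written as a quadratic $az^2+bz+c$ in $z=A/B$, one immediately checks $a+b+c = 1+(p_k-1)-p_k = 0$, so $z=1$ is a root; Vieta's formula then identifies the other root as
\[ z_+ = \frac{(1-x)\bigl(p_k-(p_k-1)x\bigr)}{x\bigl(1+(p_k-1)x\bigr)}, \]
and produces the factorization
\[ c_k(x) = x^{2p_k/k+1}\bigl(1+(p_k-1)x\bigr)(z-1)(z-z_+), \qquad z = \bigl((1-x)/x\bigr)^{p_k/k}. \]
For $x\in[1/10,1/2)$ the first three factors are visibly positive and $z>1$, so $c_k(x)<0$ reduces, after the substitution $y=(1-x)/x\in(1,9]$, to
\[ G(y) := \log\frac{1+p_k y}{p_k+y} - \Bigl(\frac{p_k}{k}-1\Bigr)\log y > 0 \quad \text{for all } y \in (1,9]. \]

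The second step is a unimodality analysis of $G$. Since $G(1)=0$, I would differentiate; clearing positive denominators, $G'(y)$ has the sign of a downward-opening quadratic $Q(y)$ in $y$ with product of roots equal to $1$ and with $Q(1)=(p_k+1)\bigl[(p_k-1)-\beta(p_k+1)\bigr]>0$ by \eqref{eq:ineqforpk2} (where $\beta=p_k/k-1$). Hence the roots of $Q$ satisfy $y_1<1<y_2=1/y_1$, and $G$ is strictly increasing on $[1,y_2]$ and strictly decreasing on $[y_2,\infty)$. Combined with $G(1)=0$, this unimodality reduces the desired inequality ``$G>0$ on $(1,9]$'' to the single endpoint check
\[ G(9) > 0, \qquad\text{i.e.,}\qquad \frac{1+9p_k}{p_k+9} > 9^{p_k/k-1}. \]

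The last step is the verification of this single numerical inequality for every $k\geq2$. For large $k$ I use \eqref{eq:ineqforpk} together with a Taylor estimate for $9^{p_k/k-1} = 9\cdot e^{t}$ with $t=(p_k-2k)(\log 9)/k<0$ to obtain an upper bound of the shape $9^{p_k/k-1} \leq 9 - c_1\log_2(\pi k)/k + O(\log_2^2(\pi k)/k^2)$, while $(1+9p_k)/(p_k+9) = 9 - 720/(9p_k+81) \geq 9 - 80/(k+9)$ follows from $p_k>k$; the two combine to yield the strict inequality once $\log_2(\pi k)$ exceeds an explicit constant, hence for all $k\geq K_0$ with a $K_0$ pinned down along the way. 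For the finite remaining range $2\leq k<K_0$ I verify $G(9)>0$ directly with Mathematica to the prescribed precision. The main obstacle will be keeping $K_0$ modest enough that the computer check is painless, which requires slightly careful bookkeeping of the second-order term in the exponential expansion; the rest of the argument is mechanical.
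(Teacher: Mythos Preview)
Your proposal is correct and essentially matches the paper's proof: both substitute $y=(1-x)/x$, factor out $y^{p_k/k}-1$ (you via the quadratic-in-$z$ observation $a+b+c=0$, the paper by direct algebraic manipulation), and then reduce via a convexity-type argument---your unimodality of $G$ is equivalent to the paper's concavity of $\psi_k(y)=p_k y^{2-p_k/k}+y^{1-p_k/k}-y-p_k$ on $[1,\infty)$---to the identical endpoint inequality $(1+9p_k)/(p_k+9)>9^{p_k/k-1}$, handled by computer for small $k$ and by the asymptotics \eqref{eq:ineqforpk} for large $k$. The only practical difference is that the paper uses the sharper bound $p_k>7k/4$ (valid for $k\ge11$) in the large-$k$ step, which confines the Mathematica verification to $2\le k\le10$ rather than the larger $K_0$ your cruder bound $p_k>k$ would require.
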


\begin{proof}
Substituting
\[ y=\frac{1-x}{x} \quad\Longleftrightarrow\quad x=\frac{1}{y+1} \]
the claimed inequality $c_k(x)<0$ for $x\in[1/10,1/2)$ turns into
\begin{equation}\label{eq:suff1}
\big(y^{p_k/k}-1\big) \big(p_k y^{2-p_k/k} + y^{1-p_k/k} - y - p_k\big) > 0 \quad\text{for } y\in(1,9].
\end{equation}
(Here is where Mathematica can be conveniently used too.)
Since we only care about $y>1$, inequality \eqref{eq:suff1} is further equivalent with
\begin{equation}\label{eq:suff2}
\psi_k(y) > 0 \quad\text{for } y\in(1,9],
\end{equation}
where $\psi_k\colon(0,\infty)\to\mathbb{R}$ is an auxiliary function defined as
\[ \psi_k(y) := p_k y^{2-p_k/k} + y^{1-p_k/k} - y - p_k. \]
Note that
\[ \psi_k''(y) = -\frac{p_k}{k} \Big(\frac{p_k}{k}-1\Big) y^{-1-p_k/k} \big((2k-p_k)y - 1\big), \]
which is negative, thanks to $y>1$ and \eqref{eq:ineqforpk2}.
Therefore, $\psi_k$ is concave on $[1,\infty)$ and \eqref{eq:suff2} will follow from $\psi_k(1)=0$ once we also verify that
\begin{equation*}
\psi_k(9) > 0.
\end{equation*}
However, this can be rewritten as
\begin{equation}\label{eq:suff3}
9^{2-p_k/k} > 1 + \frac{80}{9p_k+1}.
\end{equation}
For the values $2\leq k\leq 10$ Mathematica verifies \eqref{eq:suff3} by computing the ratio of the two sides reliably to $10$ digits.
On the other hand, for $k\geq11$ estimates \eqref{eq:ineqforpk} easily give $p_k>7k/4$, so, by \eqref{eq:ineqforpk},
\begin{align*}
9^{2-p_k/k}
& > \exp\Big(\frac{\log 3 \log_2(\pi k)}{k}\Big)
> 1 + \frac{\log 3 \log_2(\pi k)}{k} \\
& \geq 1 + \frac{\log 3 \log_2(11\pi)}{k}
> 1 + \frac{320}{63k} > 1 + \frac{80}{9p_k} > 1 + \frac{80}{9p_k+1}
\end{align*}
and \eqref{eq:suff3} follows.
\end{proof}

Now we are in position to give a short proof of the main result.

\begin{proof}[Proof of Theorem~\ref{thm:main}]
Since $f_k(1-x)=f_k(x)$, we only need to show the desired inequality \eqref{eq:mainineq2} for $x\in[0,1/2]$.
Suppose that the maximum of $f_k$ on $[0,1/2]$ is strictly greater than $1$ and that this maximum is attained at some point $x_{\max}$.
From Lemma~\ref{lm:easyest} and $f_k(1/2)=1$ we conclude $1/10<x_{\max}<1/2$.
Necessary conditions for the local maximum give $f_k'(x_{\max})=0$ and $f_k''(x_{\max})\leq0$.
We clearly have $a_k(x_{\max})\geq0$, while Lemma~\ref{lm:cisnegative} gives $c_k(x_{\max})<0$.
Finally, differential equation \eqref{eq:diffeq} from Lemma~\ref{lm:differential} gives a contradiction:
\[ 0 = \underbrace{a_k(x_{\max})}_{\geq0} \underbrace{f_k''(x_{\max})}_{\leq0} + b_k(x_{\max}) \underbrace{f_k'(x_{\max})}_{=0} + p_k \underbrace{c_k(x_{\max})}_{<0} \underbrace{f_k(x_{\max})}_{>1} < 0. \qedhere \]
\end{proof}

\begin{remark}
Since we were using the trick of bounding a function via a differential equation, one can still ask for a ``more quantitative'' proof of estimate \eqref{eq:mainineq2}.
Using \eqref{eq:Stirbinom} and estimating the error in Simpson's formula, one can show that
$f_k(x) \leq g_k(x) + O(1/k)$
for $1/10\leq x\leq1/2$ as $k\to\infty$, where
\[ g_k(x) := \frac{1}{2\pi} \int_{1/k}^{1-1/k} \varphi_{k,x}(t) \,\textup{d}t \]
and $\varphi_{k,x}$ is an auxiliary function defined by the formula
\[ \varphi_{k,x}(t) := \frac{(1-x)^{(1-t)p_k} x^{t p_k}}{(1-t)^{2(1-t)k+1} t^{2tk+1}}. \]
Moreover, the function $\varphi_{k,x}$ is log-concave on $[1/k,1-1/k]$, attains its unique maximum on that interval at some point $t_{\max}$ between $x$ and $1/2$, and it can be estimated pointwise in terms of its maximum as
\[ \varphi_{k,x}(t) \leq \varphi_{k,x}(t_{\max}) e^{-4(k-1)(t-t_{\max})^2} \]
for $1/k\leq t\leq1-1/k$.
All this can be turned into an alternative proof of \eqref{eq:mainineq2} on a major part of the interval $[0,1/2]$, such as $1/10\leq x\leq 1/2-1/\sqrt{k}$, but only for sufficiently large integers $k$.
\end{remark}


\section{Proof of Corollary~\ref{cor:probab}}
\label{sec:corollary}
Take an arbitrary parameter $q\in[0,1]$ and let $Y_1,Z_1,Y_2,Z_2,\ldots$ be independent random variables with distributions
\[ Y_i \sim \left(\begin{matrix} 0 & 1 \\ 1-q & q \end{matrix}\right), \quad Z_i \sim \left(\begin{matrix} 0 & 1 \\ 1/2 & 1/2 \end{matrix}\right) \]
for every index $i=1,2,\ldots$.
Observe that
\[ Y_i - Z_i \sim \left(\begin{matrix} -1 & 0 & 1 \\ (1-q)/2 & 1/2 & q/2 \end{matrix}\right), \]
so by choosing $q=2\mathbb{P}(X_1=1)$ we achieve that the sequence $(Y_i-Z_i)_{i=1}^{\infty}$ has the same (joint) distribution as $(X_i)_{i=1}^{\infty}$.
Inequality \eqref{eq:probineq} now becomes
\begin{align*}
& \bigg(\sum_{j=0}^{k} \mathbb{P}(Y_1+\cdots+Y_k=j,\, Z_1+\cdots+Z_k=j)\bigg)^{1/p_k} \\
& \leq \mathbb{P}(Y_1=\cdots=Y_k=0,\, Z_1=\cdots=Z_k=1)^{1/p_k} \\
& \quad + \mathbb{P}(Y_1=\cdots=Y_k=1,\, Z_1=\cdots=Z_k=0)^{1/p_k}.
\end{align*}
This simplifies further as
\begin{equation}\label{eq:altprob}
\bigg(\sum_{j=0}^{k} \dbinom{k}{j} (1-q)^{k-j} q^j \dbinom{k}{j}\Big(\frac{1}{2}\Big)^k \bigg)^{1/p_k} \leq \Big(\frac{1-q}{2}\Big)^{k/p_k} + \Big(\frac{q}{2}\Big)^{k/p_k}
\end{equation}
for every positive integer $k$ and every $q\in[0,1]$, which becomes precisely \eqref{eq:mainineq} with
\begin{equation}\label{eq:altparam}
a=\Big(\frac{1-q}{2}\Big)^{k/p_k}, \quad b=\Big(\frac{q}{2}\Big)^{k/p_k}.
\end{equation}

Conversely, by the homogeneity of \eqref{eq:mainineq} one is allowed to add an additional constraint $a^{p_k/k}+b^{p_k/k}=1/2$, which allows us to parameterize the pair $(a,b)$ as in \eqref{eq:altparam} for some $q\in[0,1]$ and then \eqref{eq:mainineq} turns precisely into \eqref{eq:altprob}.
The claim about the optimality of $p_k$ then also follows from the initial comments on the sharpness of \eqref{eq:mainineq}.


\section{Comments on number means}
\label{sec:means}
For a positive integer $k$ the quantity
\[ \mathfrak{W}_2^{[k,k]}(x,y) := \bigg( \frac{\sum_{j=0}^{k}\binom{k}{j}^2 x^{k-j} y^{j}}{\binom{2k}{k}} \bigg)^{1/k} \]
is \emph{the Whiteley mean} \cite[Subsection~V.5.2]{Bul03} with parameters $k,k$ of the numbers $x,y\in[0,\infty)$.
It can also be understood as \emph{the $k$-th elementary symmetric polynomial mean} \cite[Section~V.1]{Bul03} of the numbers
\[ \underbrace{x,\ldots,x}_{k},\underbrace{y,\ldots,y}_{k}. \]
On the other hand,
\[ \mathfrak{M}_2^{[r]}(x,y) :=
\begin{cases}
\bigg( {\displaystyle\frac{x^r + y^r}{2}} \bigg)^{1/r} & \text{for } r\in\mathbb{R},\ r\neq0, \\
\sqrt{x y} & \text{for } r=0
\end{cases} \]
is the well-known \emph{power mean} \cite[Section~III.1]{Bul03} with exponent $r$.
Substituting $x=a^{p_k/k}$, $y=b^{p_k/k}$ the main inequality of this paper \eqref{eq:mainineq} can be reformulated equivalently as
\begin{equation}\label{eq:meansineq1}
\mathfrak{W}_2^{[k,k]}(x,y) \leq \mathfrak{M}_2^{[r_k]}(x,y) \quad\text{for } x,y\in[0,\infty),
\end{equation}
where
\[ r_k = \frac{k}{\log_2\binom{2k}{k}} \in\Big(\frac{1}{2},1\Big]. \]
Choosing $x=1$, $y=0$ one easily observes that this exponent $r_k$ is the smallest one such that \eqref{eq:meansineq1} can hold.
A very special case of a result by Bochi, Iommi, and Ponce \cite[Theorem~3.4]{BIP21} showed a weaker inequality,
\[ \mathfrak{W}_2^{[k,k]}(x,y) \leq 2^{2-1/r_k} \mathfrak{M}_2^{[1/2]}(x,y), \]
which would not be sufficient for our intended application to Corollary~\ref{cor:additive}, but it becomes the same as \eqref{eq:meansineq1} in the limit as $k\to\infty$.
Nice observations from this paragraph have all been communicated to the author by Jairo Bochi.

In the other direction, Bochi, Iommi, and Ponce \cite[Theorem~3.4]{BIP21} also proved
\[ \mathfrak{W}_2^{[k,k]}(x,y) \geq \mathfrak{M}_2^{[1/2]}(x,y). \]
We are in position to give a sharpening of the last estimate:
\begin{equation}\label{eq:meansineq2}
\mathfrak{W}_2^{[k,k]}(x,y) \geq \mathfrak{M}_2^{[k/(2k-1)]}(x,y) \quad\text{for } x,y\in[0,\infty)
\end{equation}
and every positive integer $k$.
In fact, the exponent $r=k/(2k-1)$ is the largest one such that inequality $\mathfrak{W}_2^{[k,k]}(x,y) \geq \mathfrak{M}_2^{[r]}(x,y)$ can hold; this is easily seen by observing the asymptotic expansions:
\begin{align*}
\mathfrak{W}_2^{[k,k]}(1+\varepsilon,1-\varepsilon) & = 1 - \frac{k-1}{2(2k-1)}\varepsilon^2 + O(\varepsilon^3), \\
\mathfrak{M}_2^{[r]}(1+\varepsilon,1-\varepsilon) & = 1 - \frac{1-r}{2}\varepsilon^2 + O(\varepsilon^3)
\end{align*}
as $\varepsilon\to0^+$.
Therefore, \eqref{eq:meansineq1} and \eqref{eq:meansineq2} together give optimal comparisons of the Whiteley mean $\mathfrak{W}_2^{[k,k]}(x,y)$ with power means.

The proof of inequality \eqref{eq:meansineq2} is much simpler than that of \eqref{eq:mainineq}. We only need to reuse a few ideas from Section~\ref{sec:theorem}.
Substituting $a=x^{k/(2k-1)}$, $b=y^{k/(2k-1)}$ we transform \eqref{eq:meansineq2} into
\[ \sum_{j=0}^{k} \dbinom{k}{j}^2 a^{(2k-1)(k-j)/k} b^{(2k-1)j/k} \geq \dbinom{2k}{k} 2^{-2k+1} (a+b)^{2k-1} \quad\text{for } a,b\in[0,\infty). \]
By homogeneity we can normalize $a+b=1$.
Thus, we only need to prove
\[ h_k(x) \geq h_k\Big(\frac{1}{2}\Big) \quad \text{for every } x\in[0,1] \]
and a function $h_k\colon[0,1]\to[0,\infty)$ defined by
\[ h_k(x) := \sum_{j=0}^{k} \dbinom{k}{j}^2 (1-x)^{(2k-1)(k-j)/k} x^{(2k-1)j/k}. \]
Recall that the proof of Lemma~\ref{lm:differential} did not use the particular value of $p_k$, so it can be applied with $p_k$ replaced with $2k-1$. This observation yields the differential equation
\begin{equation}\label{eq:diffeqhk}
\widetilde{a}_k(x) h_k''(x) + \widetilde{b}_k(x) h_k'(x) + (2k-1) \widetilde{c}_k(x) h_k(x) = 0,
\end{equation}
where
\begin{align*}
\widetilde{a}_k(x) & := (1-x)^2 x^2 \big((1-x)^{2-1/k}-x^{2-1/k}\big)^2, \\
\widetilde{b}_k(x) & := (1-x) x \big((1-x)^{2-1/k}-x^{2-1/k}\big) \\
& \qquad\times \Big((1-x)^{2-1/k}\big(1+4(k-1)x\big)+x^{2-1/k}\big(1+4(k-1)(1-x)\big)\Big), \\
\widetilde{c}_k(x) & := (1-x)^{4-2/k} x \big(1+2(k-1)x\big) + x^{4-2/k} (1-x) \big(1+2(k-1)(1-x)\big) \\
& \qquad - (1-x)^{2-1/k} x^{2-1/k} \big(2k-1-4(k-1)(1-x)x\big) .
\end{align*}
Note that for $k\geq2$ and $x\in(0,1/2)$ we have $\widetilde{a}_k(x)>0$ and $\widetilde{c}_k(x)>0$. Indeed, by substituting $z=((1-x)/x)^{1/k}>1$, simplifying, and factoring polynomials, positivity of $\widetilde{c}_k$ reduces to
\[ \frac{z^{2k-1} (z-1) (z^{2k-1}-1)}{(z^k+1)^{6-2/k}} \Big(\sum_{i=1}^{k-1}(z^i + z^{-i} - 2)\Big) > 0, \]
which clearly holds.
It is easy to see $h_k(1-x)=h_k(x)$ and $h_k(0)\geq h_k(1/2)$.
Now take $k\geq2$ and suppose that $h_k$ attains its minimum at some point $x_{\min}\in(0,1/2)$.
Differential equation \eqref{eq:diffeqhk} gives
\[ 0 = \underbrace{\widetilde{a}_k(x_{\min})}_{>0} \underbrace{h_k''(x_{\min})}_{\geq0} + \widetilde{b}_k(x_{\min}) \underbrace{h_k'(x_{\min})}_{=0} + (2k-1) \underbrace{\widetilde{c}_k(x_{\min})}_{>0} \underbrace{h_k(x_{\min})}_{>0} > 0, \]
which is a contradiction.


\section*{Acknowledgments}
This work was supported in part by the \emph{Croatian Science Foundation} project IP-2018-01-7491 (DEPOMOS).
The author is grateful to Aleksandar Bulj for turning his attention to the problem and to Rudi Mrazovi\'{c} for bringing up lazy simple random walks in a discussion.
The author also thanks Jairo Bochi for excellent remarks on number means and an elegant reformulation of the main inequality.
Finally, the author is grateful to the anonymous referee for discovering a minor mistake in the proof of Lemma~\ref{lm:easyest}.


\bibliography{sums_and_energies}{}
\bibliographystyle{plain}

\end{document}